\def\@citecolor{blue}
\def\@urlcolor{blue}
\def\@linkcolor{blue}
\newcommand{\inlineitem}[1][]{%
\ifnum\enit@type=\tw@
    {\descriptionlabel{#1}}
  \hspace{\labelsep}
\else
  \ifnum\enit@type=\z@
       \refstepcounter{\@listctr}\fi
    \quad\@itemlabel\hspace{\labelsep}
\fi}
\def\theequation{\thesection.\@arabic \c@equation}
\def\@citecolor{blue}
\def\@urlcolor{blue}
\def\@linkcolor{blue}
\def\theenumi{\@roman\c@enumi}
\theoremstyle{plain}
\newtheorem{theorem}[equation]{Theorem}
\newtheorem*{definition*}{Definition}
\newtheorem{lemma}[equation]{Lemma}
\newtheorem{proposition}[equation]{Proposition}
\newtheorem*{remark*}{Remark}
\theoremstyle{definition}
\newtheorem{remark}[equation]{Remark}
\newtheorem{remarks}[equation]{Remarks}
\newtheorem{example}[equation]{Example}
\newtheorem{definition}[equation]{Definition}
\def\NZQ{\mathbb}               
\def\NN{{\NZQ N}}
\def\frk{\mathfrak}               
\def\aa{{\frk a}}
\def\bb{{\frk b}}
\def\mm{{\frk m}}
\def\opn#1#2{\def#1{\operatorname{#2}}} 
\opn\supp{supp}
\opn\chara{char}
\opn\length{\ell}
\opn\projdim{proj\,dim}
\opn\depth{depth}
\opn\reg{reg}
\opn\lreg{lreg}
\opn\sat{^{sat}}
\opn\lex{^{lex}}
\opn\lra{\longrightarrow}
\opn\thh{^{th}}
\opn\st{^{st}}
\opn\pol{^{\bf p}}
\opn\SK{Sk}
\opn\Char{char}
\opn\Ker{Ker}
\opn\Coker{Coker}
\opn\Im{Im}
\opn\Hom{Hom}
\opn\Tor{Tor}
\opn\Ext{Ext}
\opn\End{End}
\opn\Aut{Aut}
\opn\id{id}
\opn\GL{GL}
\opn\Gin{Gin}
\opn\gin{gin}
\opn\Hilb{Hilb}
\opn\HilbS{HilbS}
\opn\HilbPol{HilbPol}
\opn\ini{in}
\opn\End{end}
\DeclareMathOperator{\hP}{\mathcal H}
\DeclareMathOperator{\iP}{\mathcal I}
\begin{document}

\title{Distractions of Shakin rings}
\author{Giulio Caviglia}
\address{Giu\-lio Ca\-vi\-glia - Department of Mathematics -  Purdue University - 150 N. University Street, West Lafayette - 
  IN 47907-2067 - USA}
\email{gcavigli@math.purdue.edu}
\author{Enrico Sbarra}
\address{Enrico Sbarra - Dipartimento di Matematica - Universit\`a degli Studi di Pisa -Largo Bruno Pontecorvo 5 - 56127 Pisa - Italy}
\email{sbarra@dm.unipi.it}
\thanks{The work of the first author was supported by a grant from the
Simons Foundation (209661 to G. C.)}
\subjclass[2010]{Primary 13A02, 13D02;  Secondary 13P10, 13P20}

\begin{abstract} 
We study, by means of embeddings of Hilbert functions,  a class of rings which we call Shakin rings, i.e. quotients $K[X_1,\ldots,X_n]/\aa$ of a polynomial ring over a field $K$  by ideals $\aa=L+P$  which are the sum of a piecewise lex-segment ideal $L$, as defined by Shakin, and a pure powers ideal $P$. Our main results extend Abedelfatah's recent work on the Eisenbud-Green-Harris conjecture, Shakin's generalization of Macaulay and Bigatti-Hulett-Pardue theorems on Betti numbers and, when $\chara(K)=0$, Mermin-Murai theorem on the Lex-Plus-Power inequality, from monomial regular sequences to a larger class of ideals. 
We also prove an extremality property of embeddings induced by distractions in terms of Hilbert functions of local cohomology modules.
 \end{abstract}
\keywords{Eisenbud-Green-Harris Conjecture, embeddings of Hilbert functions, distractions, piecewise lex-segment ideals}
\date{\today}

\maketitle
\

\section*{Introduction}

Hilbert functions are an important object of study in commutative algebra and algebraic geometry  since they encode several fundamental invariants of variates and their coordinate rings such as dimension and multiplicity. A notable result is due to Macaulay \cite{Ma} who provided a characterization of the numerical functions which are Hilbert functions of standard graded algebras, by means of lexicographic (or lex-segment) ideals.
Later, Kruskal and Katona \cite{Kr,Ka} completely characterized the numerical sequences which are $f$-vectors of abstract simplicial complexes, thus 
establishing a remarkable analogue of Macaulay's theorem in algebraic and extremal combinatorics which can be rephrased in terms of Hilbert functions of graded quotients of algebras defined by monomial regular sequences of pure quadrics.

One of the most relevant open problem in the study of Hilbert functions is a conjecture, due to Eisenbud, Green and Harris \cite{EiGrHa1, EiGrHa2}, which aims at extending Kruskal-Katona theorem (and the subsequent generalization of Clements and Lindstr\"om \cite{ClLi}) to a larger class of objects, namely coordinate rings of complete intersections, and obtaining in this way a strong generalization of the Cayley-Bacharach theorem for projective plane cubic curves.
The Eisenbud-Green-Harris conjecture predicts that all Hilbert functions of homogeneous ideals of $R=A/\aa$, where $A$ is a polynomial ring  over a field $K$ and $\aa$ is an ideal of $A$ generated by a homogeneous regular sequence, are equal to those of the images of some lex-segment ideals of $A$ in the quotient ring $A/P$, where $P$ is generated by a certain  regular sequence of pure powers of variables. 

This conjecture, which has been solved in some cases \cite{Ab,CaMa,CaCoVa,Ch,ClLi,FrRi}, renewed a  great deal of interest in understanding and eventually classifying Hilbert functions of quotients  of standard graded algebras $R=A/\aa$, where $A$ is a polynomial ring  over a field $K$ and $\aa$ is a fixed homogeneous ideal of $A,$ in terms of specific properties of $\aa.$ 

In recent years Mermin, Peeva and their collaborators started a systematic investigation of rings $R=A/\aa$ for which all the Hilbert functions of  homogeneous ideals are obtained by Hilbert functions of images in $R$ of lex-segment ideals. 
They called these rings Macaulay-lex \cite{GaHoPe,Me1,Me2,MeMu1,MeMu2,MePe,MePeSt}.  Two typical examples of such rings are the polynomial ring $A$ and the so called Clements-Lindstr\"om rings, i.e. $R=A/P$ where $P=(X_1^{d_1},\dots, X_r^{d_r})$ and $d_1\leq \cdots \leq d_r.$ 

In a polynomial ring $A$, among all the graded ideal with a fixed Hilbert function, the lex-segment ideal enjoys several extremal properties. We summarize some of them in three categories. \\
(1) The lex-segment ideals are the ones with the largest number of minimal generators. Precisely  for a fixed Hilbert function and for every $d$, the lex ideal maximizes the values of $\beta^A_{0d}(-)$, and hence the value of $\beta_0^A(-)$. This fact is a direct consequence of Macaulay's theorem. \\
(2) More generally, by theorems of Bigatti, Hulett \cite{Bi,Hu} (when $\chara(K)=0$) and \cite{Pa}, for every $i,d$ the lex-segment ideal  also maximizes the graded Betti numbers $\beta^A_{id}(-).$\\
(3) Finally, by \cite{Sb1}, the lex-segment ideal   maximizes  the Hilbert functions of the local cohomology modules of $A/(-)$, precisely for every $i$ and $d$ it maximizes  $\dim_K H^i_\mm(A/-)_d$ where $\mm$ is the homogeneous maximal ideal of $A.$ 

When the polynomial ring $A$ is replaced by a Clements-Lindst\"om ring $R=A/P$, we know by \cite{ClLi} that for every Hilbert function, the set of homogeneous ideals with that Hilbert function (if not empty) contains the image, say  $L$, in $R$ of a lex segment ideal of $A.$ The ideal $L$ enjoys extremal properties analogous to the ones discussed above: (1) as a direct consequence of \cite{ClLi},  it  maximizes the values of $\beta^A_{0d}(R/-)$; (2) by \cite{MeMu2}, for all $i$ and $d$, it  maximizes the values of $\beta^A_{id}(R/-)$ and (3) by \cite{CaSb}, for all $i$ and $d$, it maximizes  $\dim_K H^i_\mm(R/-)_d.$

Shakin \cite{Sh}  studied the case of  $R=A/\aa$ where $\aa$ is a piecewise lex-segment ideal, i.e.  the sum over $i$ of the extension to $A$ of lex-segment ideals of $K[X_1,\dots,X_i].$ He showed that such an $R$ is Macaulay-lex, or equivalently that the set of all homogeneous ideals of $R$ with a fixed Hilbert function, when not empty, contains, as in the case of the Clements-Lindstr\"om rings, the image in $R$ of a lex-segment ideal of $A.$ Shakin proved that such an image also satisfies (2), and in particular (1), i.e. it maximizes $\beta^A_{id}(R/-).$ 

In this paper we consider a class of rings which generalizes both the Clements-Lindstr\"om rings and the ones studied by Shakin, namely we study quotients of polynomial rings by the sum of a piecewise lex-segment ideal and a pure power ideal $P=(X_1^{d_1},\dots, X_r^{d_r})$ with $d_1\leq \cdots \leq d_r.$  We call such rings \emph{Shakin rings}. The techniques used in our work are based on the notion of embeddings of Hilbert functions, as defined in \cite{CaKu1}. For instance Macaulay-lex rings are a special example of rings with such embeddings.  

Our first result, Theorem \ref{intermediate},  which we derive as a direct consequence of all the available results on embeddings of Hilbert functions \cite{CaKu1,CaKu2,CaSb}, states that Shakin rings are Macaulay-lex and that they satisfy the properties (1),(2) and (3) mentioned above (with the exception that, to prove (2) when $P\not= (0)$,  we assume $\chara(K)=0$).

The second half this paper is motivated by a recent result of Abedelfatah \cite{Ab}, who proved that the Eisenbud-Green-Harris conjecture holds for distractions, as defined in \cite{BiCoRo}, of Clements-Lindstr\"om ring. 
We prove, in Theorem \ref{maindist},  that the analogous statement (expressed in terms of embeddings of Hilbert functions) holds for Shakin rings. Furthermore we show that distractions of Shakin rings satisfy the analogue of (1), (3),  and under certain assumption (2),  mentioned above.

\section{Embeddings of Hilbert functions and distractions}
Let $A=K[X_1,\dots,X_n]$ be a standard graded polynomial ring over a field $K$ and $\mm=\mm_A$ be its graded maximal 
ideal. Given a homogeneous ideal $\aa\subseteq A$, the quotient ring $A/\aa$ is a standard graded $K$-algebra as well. Aiming at classifying Hilbert functions of standard graded $K$-algebras, we are interested in the study of the poset $\iP_{A/\aa}$ of all homogeneous ideals of $A/\aa$ ordered by inclusion, and of the poset $\hP_{A/\aa}$ of all Hilbert functions of such ideals ordered by the natural point-wise partial order.  In \cite{CaKu1} the problem is approached with the introduction of {\em embeddings (of Hilbert functions)}, which are order-preserving injections $\epsilon\:\hP_{A/\aa} \lra \iP_{A/\aa}$ such that the Hilbert function of $\epsilon(H)$ is equal to $H$, for all $H\in\hP_{A/\aa}$.

The use of embeddings proved to be valuable to extend many significant results known for the polynomial ring to other standard graded $K$-algebras,   \cite{CaKu1,CaKu2,CaSb}. We are therefore interested in understanding for which ideals $\aa$ such an $\epsilon$ exists, and if this is the case we say that {\em the ring $A/\aa$ has an embedding $\epsilon$}. If $I\in \iP_{A/\aa}$ and $\Hilb(I)$ denotes its Hilbert function, with some abuse of notation, we let $\epsilon(I):=\epsilon(\Hilb(I))$. An ideal $I$ is called {\em embedded} when $I\in \Im(\epsilon)$ or, equivalently, $\epsilon (I)=I$. Finally, if $\aa$ is monomial and the pre-image in $A$ of every ideal in $\Im(\epsilon)\subseteq\iP_{A/\aa}$ is a monomial ideal, we say that $\epsilon$ is a {\em monomial embedding}. \\
 
Henceforth $\aa$ will denote a monomial ideal of $A$. 
 
 \begin{remark}\label{embChar} When $A/\aa$ has an embedding $\epsilon$, then $A/\aa$ also has a monomial embedding obtained by composing $\epsilon$ with the operation of taking the initial ideal with respect to any fixed monomial order.
Let $K$ and $\tilde K$ be two fields and consider two monomial ideals $\aa \subseteq A=K[X_1,\dots,X_n]$ and $\bb \subseteq B=\tilde K[X_1,\dots,X_n]$ generated by the same set of monomials.  Then, $A/\aa$ has an embedding if an only if $B/\bb$ does; in fact, any monomial embedding of $A/\aa$ induces (the same) monomial embedding in $B/\bb$ and vice-versa, since Hilbert functions of monomial ideals do not depend on the ground field. 
\end{remark}

\begin{remark}\label{embquoz}
Let $R$ be a ring with an embedding $\epsilon,$ and let $I\in\Im(\epsilon)$. 
The ring $R/I$ has a natural embedding $\epsilon'$ induced by $\epsilon$: if $\pi\: R\lra R/I$ denotes the canonical projection, one defines 
$\epsilon'(J):=\pi(\epsilon(\pi^{-1}(J))$, for all $J\in\iP_{R/I}$. By applying $\epsilon$ to $I\subseteq \pi^{-1}(J)$ one gets $I\subseteq \epsilon(\pi^{-1}(J))$ and, thus, $\Hilb(J)=\Hilb(\epsilon'(J))$. Moreover, if $J, H\in\iP_{R/I}$ with $\Hilb(J) \leq \Hilb(H)$, then $\Hilb(\pi^{-1}(J))\leq \Hilb(\pi^{-1}(H))$; hence $\epsilon(\pi^{-1}(J))\subseteq \epsilon(\pi^{-1}(H))$ and, consequently, $\epsilon'(J)\subseteq \epsilon'(H)$; therefore $\epsilon'$ is an embedding, since it preserves Hilbert functions and inclusions of ideals.
\end{remark}

We will study distractions of monomial ideals, as introduced in \cite[Def. 2.1]{BiCoRo}.

A {\em distraction $D$ (of $A$)} is an $n\times \NN^*$ infinite matrix whose entries $l_{ij}\in A_1$ verify $(i)$ for all choices of $j_1,\ldots,j_n\in \NN^*$,  $l_{1j_1},\ldots,l_{nj_n}$ form a system of generators of the $K$-vector space $A_1$; (ii) there exists $N\in\NN^*$ such that, for every $i=1,\ldots,n$, the entries $l_{ij}$ are constant for all $j\geq N$. Given a monomial ${\bf X}^{\bf a}=X_1^{a_1}X_2^{a_2}\cdots X_n^{a_n}\in A$ we define its distraction to be the polynomial 
$D({\bf X}^{\bf a})=\prod_{i=1}^n\prod_{j=1}^{a_i}l_{ij}$ and we extend $D$ by $A$-linearity to a map from $A$ to $A$.
By \cite[Cor. 2.10]{BiCoRo}, when $I$ is a monomial ideal, the {\em distraction $D(I)$} 
is the homogeneous ideal  generated by the distractions of a monomial system of generators of $I$; furthermore  $\Hilb(D(I))=\Hilb(I).$
It is immediate to see that $D$ preserves inclusions of ideals.

\begin{remark}\label{udsrd}
Let  $R=A/\aa$ be  a ring with an embedding $\epsilon$; also, let $\bb$ be another homogeneous ideal of $A$ and $S=A/\bb$; we are interested in studying relations between $\hP_R$ and $\hP_S$, and between $\iP_R$ and $\iP_S$, when $\bb=D(\aa)$ for a distraction $D$ of $A.$ By fixing any monomial order $\prec$ on $A$ and applying $D$ to the initial ideal of the pre-image in $A$ of an ideal $I\in \iP_R$, we immediately see that  $\hP_R\subseteq\hP_S$.  When  $\hP_S\subseteq \hP_R$ (and, therefore, $\hP_S = \hP_R$) then $S$ has an embedding, say $\epsilon_D$, induced by $\epsilon$ and $D$:
\[
\begin{CD} \hP_R @>\epsilon>> \iP_R @>\pi^{-1}>> \iP_A\\
@|  @. @VV D\; \circ\; \ini_\prec  V \\
\hP_S @>\epsilon_D>> \iP_S @<\pi<<\iP_A
\end{CD}
\]
\end{remark}

From now on, given a Hilbert function $H$, we will denote with $H_d$ its value at $d$, so that, when $M$ is a graded module, $\Hilb(M)_d=\dim_K(M_d).$ Furthermore, the Hilbert series of $M$ will be denoted by  
$\HilbS(M)$, i.e. $\HilbS(M)=\sum_{d\in \mathbb Z} \Hilb(M)_dz^d.$

\begin{remark}\label{bedistra}
Recall that, given a finitely generated graded $A$-module $M$, the {\em $(i,j)\thh$ graded Betti number
$\beta_{ij}^A(M)$} of $M$ is defined as $\Hilb\left(\Tor^A_i(M,K)\right)_j$.  
By \cite[Cor. 2.20]{BiCoRo}, for all distractions $D$ of $A$ and for all $i,j$, one has  $\beta^A_{ij}(A/\aa)=\beta^A_{ij}(A/D(\aa)).$ 
\end{remark}

It is important to observe that the distraction of a monomial ideal can be obtained, as described below, as a polarization (see \cite[sect. 1.6]{HeHi}) followed by a specialization.  

We let the polarization of a monomial ideal $\aa\subseteq A$, denoted by $P(\aa)$, be the ideal of $T=A[X_{1 1},\dots,X_{1 r_1},\dots,X_{n\,1},\dots  X_{n r_n}]$ generated by the monomials $\prod _{i=1}^n(\prod_{j=1}^{a_i} X_{i j})$ for which $\prod _{i=1}^n X_i^{a_i}$ is a minimal generator of $I;$ we have chosen $r_i$ to be equal to $0$, if no minimal monomial generator of $\aa$ is divisible by $X_i,$ or otherwise the maximum exponent $a>0$ such that $X_i^a$ divides a minimal monomial generator of $\aa.$
The elements of the set $\mathcal X= \{ X_{i}-X_{i j}~:1\leq i\leq n, ~j\geq 1\}$  form a regular sequence for $T/P(\aa)$, moreover, since there is a graded isomorphism $A/\aa \simeq   T/(P(\aa)+(\mathcal X ))$,  we have that  $\HilbS(T/P(\aa))=\HilbS(R/\aa)/(1-z)^r$ where $r=\vert \mathcal X \vert =(\sum_{i=1}^n r_i).$ 

Now consider a distraction matrix $D$ for $A$ with entries $l_{i j}$ 
and notice that $D(\aa)$ is generated by the forms $\prod _{i=1}^n(\prod_{j=1}^{a_i} l_{i j})$ for which $\prod _{i=1}^n X_i^{a_i}$ is a minimal monomial generator of $I.$ 
We have already mentioned that $\Hilb(A/\aa)=\Hilb(A/D(\aa)),$ hence we can deduce that the $r$ linear forms of the set $\mathcal L= \{l_{ij}-X_{ij}:1\leq i\leq n, ~j\geq 1\}$ are a regular sequence for 
$T/P(\aa)$ because we have a graded isomorphism $A/D(\aa)\simeq T/(P(\aa)+ (\mathcal L))$ and $A/D(\aa)$ has the expected Hilbert series $\Hilb(T/P(\aa))(1-z)^r$.

We are interested in comparing, for all $i$ and for all distractions $D$, the Hilbert functions  of the local cohomology modules $H^i_{\mm_A}(A/\aa)$ and  $H^i_{\mm_A}(A/D(\aa)).$ 

\begin{proposition}\label{codistra}
Let $\aa$ be a monomial ideal of $A$. Then, for all distractions $D$, one has $$\Hilb \left(H^i_{\mm_A}(A/\aa)\right)_j\leq \Hilb \left(H^i_{\mm_A}(A/D(\aa))\right)_j, \hbox{\;\;\; for all \;}i, j.$$
\end{proposition}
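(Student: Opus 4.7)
The plan is to use the polarization identifications established immediately before the proposition: both $A/\aa \simeq T/(P(\aa) + (\mathcal X))$ and $A/D(\aa) \simeq T/(P(\aa) + (\mathcal L))$ are quotients of the common Stanley--Reisner ring $T/P(\aa)$ by regular sequences of $r$ linear forms. The required inequality between local cohomology Hilbert functions is thus a statement comparing two specializations of $T/P(\aa)$ by regular sequences of linear forms, and I would try to deduce it by means of a one-parameter flat family interpolating these two specializations.

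Concretely, I would set $m_{ij}(t) := (1-t)X_i + t\, l_{ij}$ for $t \in K$ and define $\mathcal L_t := \{m_{ij}(t) - X_{ij}\}$, so that $m_{ij}(0) = X_i$ and $m_{ij}(1) = l_{ij}$, giving $\mathcal L_0 = \mathcal X$ and $\mathcal L_1 = \mathcal L$. A direct determinantal computation shows that $\{m_{ij}(t)\}$ satisfies condition (i) of a distraction matrix for all $t$ in some Zariski open subset $U \subseteq \mathbb A^1_K$ containing both $0$ and $1$; consequently, for all such $t$, $\mathcal L_t$ is a regular sequence of linear forms on $T/P(\aa)$, the quotient $R_t := T/(P(\aa) + \mathcal L_t)$ has constant Hilbert function equal to $\Hilb(A/\aa)$, and the family $\{R_t\}_{t \in U}$ is flat over $U$, with $R_0 \simeq A/\aa$ and $R_1 \simeq A/D(\aa)$.

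I would then invoke the upper semi-continuity of the Hilbert function of local cohomology in the flat family $\{R_t\}$: for each fixed pair $(i,j)$, the function $t \mapsto \dim_K H^i_\mm(R_t)_j$ is upper semi-continuous on $U$, via Grothendieck's semi-continuity theorem applied to the associated sheaves $\widetilde{R_t}$ on $\mathrm{Proj}\, T$ (for $i \ge 1$) together with a direct argument for $i = 0$. The hardest step, and the technical heart of the argument, is then to extract the claimed direction of the comparison between the two closed points $t = 0$ and $t = 1$: the proposition translates to an extremality property of the particular identification $\mathcal X$ of the new polarization variables with the $X_i$'s, saying that the corresponding fiber is minimal in the family with respect to the Hilbert function of local cohomology. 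I would try to establish this by tracking the action of the linear forms $m_{ij}(t) - X_{ij}$ on the Hochster combinatorial description of $H^i_{\mm_T}(T/P(\aa))$, which controls how $H^i_\mm(R_t)$ varies along the family.
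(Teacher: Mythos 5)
Your setup (the polarization $T/P(\aa)$ with the two specializations $\mathcal X$ and $\mathcal L$) is the same as the paper's, but the deformation argument you build on it has a genuine gap, and it sits exactly where you say the ``technical heart'' is. Semicontinuity in a flat family only compares special fibers with the \emph{generic} fiber: it gives $\dim_K H^i_{\mm}(R_0)_j\geq \dim_K H^i_{\mm}(R_t)_j$ and $\dim_K H^i_{\mm}(R_1)_j\geq \dim_K H^i_{\mm}(R_t)_j$ for general $t$, and never an inequality between the two closed fibers $t=0$ and $t=1$. In particular it can never show that the fiber $R_0\simeq A/\aa$ is \emph{minimal} in the family; to conclude along your lines you would have to prove that $A/\aa$ attains the generic value of the local cohomology Hilbert functions, i.e. that cutting $T/P(\aa)$ by the special sequence $\mathcal X$ behaves like cutting by the general member $\mathcal L_t$. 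Since $R_t$ for general $t$ is again of the form $A/D_t(\aa)$ for a distraction $D_t$, that claim is essentially the proposition itself, so the plan is circular at its key step; and the tool you propose for it does not apply, because Hochster's formula computes $H^i_{\mm_T}(T/P(\aa))$ for the squarefree monomial ideal upstairs but says nothing about how local cohomology changes under specialization by the non-generic, non-monomial linear forms $m_{ij}(t)-X_{ij}$.

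The paper gets the direction for free by replacing the interpolation with a Gr\"obner (weight) degeneration inside $T$. Let $g$ be the change of coordinates fixing $A$ and sending $X_{ij}\mapsto X_{ij}+l_{ij}$, and let $\mathbf w$ give weight $1$ to $X_1,\dots,X_n$ and $0$ to the polarization variables; then $\ini_{\mathbf w}(g(P(\aa)))=D(\aa)T$ (inclusion is clear on generators, equality follows from equality of Hilbert series). Sbarra's theorem \cite[Thm.~2.4]{Sb1}, which has the built-in direction ``initial ideals have larger local cohomology,'' yields $\Hilb\left(H^{i+r}_{\mm_T}(T/P(\aa))\right)\leq \Hilb\left(H^{i+r}_{\mm_T}(T/D(\aa)T)\right)$, and the two sides are converted into $\Hilb\left(H^{i}_{\mm_A}(A/\aa)\right)$ and $\Hilb\left(H^{i}_{\mm_A}(A/D(\aa))\right)$ by \cite[Cor.~5.2]{Sb1} (the polarization formula for the regular sequence $\mathcal X$) and \cite[Lemma~2.2]{Sb2} (polynomial extension), respectively. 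So the directional input you are missing is precisely \cite[Thm.~2.4]{Sb1} applied after the coordinate change $g$, not a semicontinuity statement about the family $\{R_t\}$; without such an input your argument does not close.
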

\begin{proof} 
We adopt the same notation as the above discussion.
We can  extend the field, without changing the Hilbert functions under consideration, and assume  $\vert K \vert =\infty.$ By  \cite[Cor. 5.2]{Sb1} we know that  $\HilbS\left(H^i_{\mm_A}(A/\aa)\right)=(z-1)^r\HilbS\left(H^{i+r}_{\mm_T}(T/P(\aa))\right).$
Let  $g$ be the change of coordinates of $T$ which is the identity on $A$ and sends, for every $i$ and $j$,  $X_{ij}$ to  $X_{ij}+ l_{ij}.$ Let $\mathbf{w} =(w_1,\dots,w_{n+r})$ be a weight such that $w_i=1$ when $i\leq n$ and $w_i=0$ otherwise. Let $\bb$ be the ideal $D(\aa)T.$ Notice that $\bb\subseteq in_{\mathbf w}(g(P(\aa)))$ and since these two ideals have both Hilbert series equal to $\HilbS(A/\aa)/(1-z)^r$, they are equal a well. By \cite[Thm. 2.4]{Sb1} we obtain:
$\Hilb\left(H^{i+r}_{\mm_T}(T/P(\aa))\right) \leq  \Hilb\left(H^{i+r}_{\mm_T}(T/\bb) \right)$.
 By \cite[Lemma 2.2]{Sb2} we have  $\HilbS\left(H^{i+r}_{\mm_T}(T/\bb) \right)= (\sum_{h<0}z^h)^r \HilbS\left(H^{i}_{\mm_{A}}(A/ D(\aa)) \right).$
 Finally since $(\sum_{h<0}z^h)^r(z-1)^r=1$ we obtain the desired inequality.
\end{proof}

\section {Embeddings and ring extensions} 

We start this section by recalling some definitions about embeddings of Hilbert functions, which were introduced in \cite{CaKu1,CaKu2} and \cite{CaSb}. 

Let $R=A/\aa$, where $\aa$ is not necessarily a monomial ideal. 

If $\aa$ is the $0$ ideal then Macaulay's theorem implies that the ring $R=A$ has the monomial embedding $\epsilon$, which maps an Hilbert series $H$ to the unique lexicographic-segment ideal of $\iP_A$ with Hilbert series $H$. This fact motivates the following definition. 
Let $\aa$ be a monomial ideal, $\pi$ the canonical projection of $A$ onto $R$, and assume that $R$ has an embedding $\epsilon$. Then, $\epsilon$ is called {\em the lex-embedding} if 
\[\Im(\epsilon)=\left\{ \pi(L)\in\iP_R \: L\in\iP_A,\;L \hbox{ lex-segment ideal}\right\}.\]
In other words, $R$ has the lex-embedding precisely when $R$ is Macaulay-lex in the sense of \cite{MePe}. 

Assume that $R$ is a ring with an embedding $\epsilon$ and let $S=A/\bb$ be another standard $K$-algebra with $\hP_{S}\subseteq \hP_{R}$; we write  $(S,R,\epsilon)$ and observe that, via $\epsilon$, we may associate to an ideal of $S$ an ideal of $R.$
The following definitions were introduced in \cite{CaSb}. We say that $(S,R,\epsilon)$ (or simply $\epsilon$) is {\em (local) cohomology extremal} if,  for every homogeneous ideal $I$ of $S$, one has $\Hilb \left(H^i_{\mm_A} (S/I)\right)_j\leq \Hilb \left(H^i_{\mm_A}(R/\epsilon(I))\right)_j$, for all $i, j$.  It is easy to see that, if $R$ is Artinian so is $S.$ Furthermore $(S,R,\epsilon)$ and, thus, $(R,\epsilon)=(R,R,\epsilon)$ is cohomology extremal: in this case the only non-zero local cohomology module of $R/\epsilon(I)$ is $H^0_{\mm_A}(R/\epsilon(I))= R/\epsilon(I)$, for all ideals $I$. 
 
Similarly,  if for all homogeneous ideals $I\subseteq S$ and for all $i,j$ one has  $\beta^A_{ij}(S/I)\leq \beta^A_{ij}(R/\epsilon(I))$, then $(S,R,\epsilon)$,  and $(R,\epsilon)$ when $S=R$, is said to be {\em Betti extremal}. 

\begin{remark}\label{embeddeddoso}
Let $(R,\epsilon)$ be a ring with an embedding,  $I\in  \Im(\epsilon)$ and $\epsilon'$  as in Remark \ref{embquoz}. Since $I$ is embedded and $R/\pi^{-1}(J)\simeq (R/I)/J$ for all $J\in\iP_{R/I}$, it is easy to see that, if $(R,\epsilon)$ is Betti or cohomology extremal, then $(R/I,\epsilon')$ is as well Betti or cohomology extremal.

Similarly if  $(S,R,\epsilon)$ is Betti or cohomology extremal, and $H\subseteq S$ is a homogeneous ideal, then  $(S/H, R/\epsilon(H), \epsilon')$ is as well Betti or cohomology extremal.
\end{remark}

We can now summarize, and we do in Theorem \ref{Exte}, some known results about embeddings and we refer the reader to \cite{CaKu1}, \cite{CaKu2} and \cite{CaSb} for a general treatise. We start by recalling a crucial definition for what follows.  

Let $\bar A=K[X_1,\dots,X_{n-1}]$ and let $\bar R$ be $\bar A/ \bar \aa$ for a homogeneous ideal $\bar \aa \subseteq \bar A.$ Given $e\in \NN\cup\{\infty\}$ we let  $R=\bar R [X_n]/(X_n^e)$, where $(X_{n}^e)$ denotes the zero ideal when $e=\infty$. A homogeneous ideal $J$ of $R$ is called {\em $X_n$-stable} if it can be written as 
$\bigoplus_{d\in\NN} J_{[d]}X_n^d$ where each $J_{[d]}$ is an ideal of $\bar R$ and for all $0< k+1<e$ the inclusion  $J_{[k+1]}\mm_R\subseteq  J_{[k]}$ holds, cf. \cite[Def. 3.2]{CaKu1}, \cite[Def. 1.1]{CaSb}.

\begin{remark} \label{Stab} Let $e=\infty$. 
{\rm (i)} By \cite[Lemma 4.1]{CaKu1} for every homogeneous ideal $I$ of $R$ there exists a $X_n$-stable ideal $J$ of $R$ with the same Hilbert function as $I$. {\rm (ii)} The discussion after \cite[Thm. 3.1] {CaKu2} yields that  $\beta_{ij}^A(R/I)\leq \beta_{ij}^A(R/J)$, for all $i, j$. {\rm (iii)} By \cite[Prop. 1.7]{CaSb}, one also has  $\Hilb\left(H^i_{\mm_A}(R/I)\right)\leq \Hilb\left(H^i_{\mm_A}(R/J)\right)$ for all  $i$. 
\end{remark}

\begin{theorem}\label{Exte}
Let $\bar R= \bar A/ \bar \aa$ be a ring with an embedding $\bar \epsilon$ and $R=\bar R[X_n]$. Then, $R$ has an embedding $\epsilon$ such that, for every homogeneous ideal $I$ of $R$, we have $\epsilon(I)=\bigoplus_{d\geq 0} \bar J_{[d]}X_n^d$, where each $\bar J_{[d]}\in \Im(\bar \epsilon).$
Furthermore,
{\rm (1)} if $\bar \epsilon$ is the lex-embedding, then $\epsilon$ is the lex-embedding; {\rm (2)} if $\bar \epsilon$ is cohomology extremal, then $\epsilon$ is cohomology extremal; {\rm (3)} if $\bar \epsilon$ is Betti extremal, then $\epsilon$ is Betti extremal.
\end{theorem}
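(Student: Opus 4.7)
The plan is to construct the embedding $\epsilon$ componentwise via the $X_n$-stabilization recalled in Remark \ref{Stab}, and then transfer each extremality property by combining Remark \ref{Stab}(ii)--(iii) with a level-by-level application of $\bar\epsilon$.

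First I would construct $\epsilon$. Given a homogeneous ideal $I$ of $R = \bar R[X_n]$, invoke Remark \ref{Stab}(i) to produce an $X_n$-stable ideal $J=\bigoplus_{d\geq 0} J_{[d]} X_n^d$ of $R$ with $\Hilb(J)=\Hilb(I)$. Set $\bar J_{[d]} := \bar\epsilon(J_{[d]}) \in \Im(\bar\epsilon)$ and define
\[
\epsilon(I) \;:=\; \bigoplus_{d\geq 0} \bar J_{[d]} X_n^d.
\]
I then need to verify (a) that the right-hand side is an ideal, i.e.\ that $\bar J_{[d+1]} \mm_{\bar R} \subseteq \bar J_{[d]}$ for all $d$, and (b) that $\epsilon$ is well-defined and order-preserving. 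For (a), the $X_n$-stability of $J$ gives $J_{[d+1]} \mm_{\bar R} \subseteq J_{[d]}$; applying $\bar\epsilon$, which is order-preserving on Hilbert functions, and comparing Hilbert functions degree-by-degree forces the analogous inclusion for the $\bar J_{[d]}$'s. This is precisely the mechanism behind Remark \ref{embquoz} combined with the stability framework of \cite{CaKu1}. For (b), the fact that $\Hilb(\epsilon(I))=\Hilb(I)$ is automatic (each piece has the same Hilbert function), and any two $X_n$-stable ideals with the same Hilbert function have the same components, so $\epsilon(I)$ does not depend on the choice of $J$; order-preservation then follows from the analogous order-preservation for $\bar\epsilon$ applied slicewise.

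For parts (1)--(3), the strategy is the same: first pass from $I$ to the $X_n$-stable $J$, then slice and apply the corresponding property of $\bar\epsilon$. For (1), recall that in a polynomial ring the lex-segment ideals are exactly the $X_n$-stable ideals whose components are lex-segment ideals in $\bar A$; hence if each $\bar J_{[d]}$ lifts to a lex-segment in $\bar A$, then $\epsilon(I)$ lifts to a lex-segment in $A$. For (2) and (3), use Remark \ref{Stab}(ii)--(iii) to bound $\beta^A_{ij}(R/I)$ and $\Hilb(H^i_{\mm_A}(R/I))_j$ above by the corresponding invariants of $R/J$, decompose those invariants into a sum over levels $d$ involving only the pieces $\bar R / J_{[d]}$ (via the graded decomposition of the Koszul / \v{C}ech complex on $X_n$-stable ideals, as in \cite{CaKu2} and \cite{CaSb}), apply the Betti- or cohomology-extremality of $\bar\epsilon$ to each $J_{[d]}$, and recombine to get the required inequality with $R/\epsilon(I)$ on the right.

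The main obstacle in this plan is not the construction of $\epsilon$ itself but the two reassembly arguments: one needs the precise formulas, from \cite[Thm.~3.1]{CaKu2} and \cite[Prop.~1.7]{CaSb}, that express both the graded Betti numbers and the Hilbert functions of the local cohomology modules of $R/J$ for an $X_n$-stable $J$ as a sum of invariants of the components $\bar R / J_{[d]}$, so that the pointwise extremality of $\bar\epsilon$ on each level lifts to extremality of $\epsilon$. Once those black boxes are invoked, the three furthermore clauses are essentially immediate; the only care needed is to check that the extension $\bar R[X_n] \to A = \bar A[X_n]$ is flat enough that $A$-Betti numbers and $\mm_A$-local cohomology behave consistently with their $\bar A$-counterparts, which follows from standard base-change and \cite[Lemma 2.2]{Sb2} already used in the proof of Proposition \ref{codistra}.
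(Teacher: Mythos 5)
Your overall architecture (stabilize via Remark \ref{Stab}, slice, apply $\bar\epsilon$, reassemble) is the right picture of what lies behind the theorem, but as written you assume precisely the points that carry the mathematical content, and one supporting claim is false. The assertion that ``any two $X_n$-stable ideals with the same Hilbert function have the same components'' fails: if $\bar I\neq\bar I'$ are ideals of $\bar R$ with the same Hilbert function, then $\bar I R$ and $\bar I'R$ are both $X_n$-stable, have equal Hilbert functions, and have different components. What your construction actually needs is the weaker but still nontrivial statement that the Hilbert functions of the slices $J_{[d]}$ depend only on $\Hilb(I)$, and, for order-preservation, that $\Hilb(I)\leq\Hilb(I')$ forces $\Hilb(J_{[d]})\leq\Hilb(J'_{[d]})$ for every $d$; these are exactly the content of \cite[Thm.~3.3]{CaKu1}, which the paper cites (together with Remark \ref{Stab} and \cite[Remark 2.3]{CaKu2}) rather than reproves. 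Your ``order-preservation then follows slicewise'' restates that conclusion without an argument. Likewise, your proof of (1) treats $R$ as a polynomial ring, but $R=\bar R[X_n]=A/\bar\aa A$: the lex-embedding of $R$ concerns images in $R$ of lex-segment ideals of $A$, so the claimed characterization ``lex ideals are exactly the $X_n$-stable ideals with lex slices'' is not the statement you need, and it is unproved even for $A$ itself. The paper instead quotes \cite[Thm.~4.1]{MePe} to get that $R$ is Macaulay-lex and then uses the embedding-order machinery of \cite{CaKu1} (Discussion 2.15, Thm.~3.11, Prop.~2.16) to identify the constructed $\epsilon$ with the lex-embedding; some substitute for that identification is required.

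For (2) and (3) you invoke ``precise formulas'' from \cite[Thm.~3.1]{CaKu2} and \cite[Prop.~1.7]{CaSb} expressing $\beta^A_{ij}(R/J)$ and $\Hilb\left(H^i_{\mm_A}(R/J)\right)_j$ as sums over the slices $\bar R/J_{[d]}$; those references do not provide such decompositions. \cite[Prop.~1.7]{CaSb} is just the comparison of $R/I$ with $R/J$ recorded in Remark \ref{Stab}(iii), while the transfer of extremality from $\bar\epsilon$ to $\epsilon$ is itself the statement of \cite[Thm.~3.1]{CaKu2} (Betti) and of \cite[Thm.~3.1]{CaSb} applied with a single ring (cohomology); this is exactly how the paper concludes after the stable reduction. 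If you wish to avoid citing those theorems, you must actually supply the reassembly argument, which is the main content of those papers and not a routine Koszul/\v{C}ech bookkeeping step; the flatness concern you raise at the end, by contrast, is not where the difficulty lies.
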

\begin{proof} The existence of $\epsilon$ follows from \cite[Thm. 3.3]{CaKu1} together with Remark \ref{Stab} and \cite[Remark 2.3]{CaKu2}. With the assumption in {\rm (1)} it has been proven in \cite[Thm 4.1]{MePe} that $R$ has the lex-embedding; to see that the above $\epsilon$ coincides with the lex-embedding of $R$, we  notice that $\bar \epsilon$ induces an embedding order on $\bar R$ (see \cite[Discussion 2.15]{CaKu1}), which is a monomial order  in the sense of \cite[Def. 1.2]{CaKu1}; by \cite[Thm. 3.11]{CaKu1}, $\epsilon$ induces a monomial order on $R$ as well and, finally, by \cite[Prop. 2.16]{CaKu1}, $\epsilon$ is the lex-embedding.
Part {\rm (2)} is a special case of \cite[Thm. 3.1]{CaSb}, namely when there is only one ring. Finally, by Remark \ref{Stab}{\rm (ii)}, {\rm (3)} is a consequence of \cite[Thm. 3.1]{CaKu2}.
\end{proof} 

\section{Embeddings of Shakin rings}

From now on we let $\aa$ be a monomial ideal of $A=K[X_1,\dots,X_n]$ and  $R=A/\aa$.

\begin{definition}\cite[Def. 2.1 and Prop. 2.4]{Sh}
For $i=1,\dots,n$, let $A_{(i)}= K[X_1,\ldots,X_i]\subseteq A$. An ideal of $A$ is called {\it piecewise lex-segment} (or {\em piecewise-lex} for short) if it can be written as a sum of (possibly zero) monomial ideals  $L_1,\ldots,L_n$, where  for every $i$, $L_i=L_{(i)}A$ and $L_{(i)}$ is a lex-segment ideal of $A_{(i)}$. 
\end{definition}

It is proven in \cite[Thm 3.10]{Sh}  that, if $\aa$ is a piecewise-lex ideal, then Macaulay's Theorem holds for $R=A/\aa$. Moreover, in \cite[Thm 4.1]{Sh}, it is proven that Bigatti-Hulett-Pardue result on extremality of Betti numbers of lex-segment ideals of $A$ extends to $R=A/\aa$, whenever $\aa$ is a piecewise-lex ideal and $\chara(K)=0.$ 
By using embeddings, it is possible to prove these results for a larger class of ideals, which we introduce in the next definition.

\begin{definition}\label{shakinideal} 
We call an ideal $\aa \subseteq A$  a {\it Shakin ideal} if there exist a piecewise-lex ideal $L$ and a pure powers ideal $P=(X_1^{d_1},\ldots, X_r^{d_r})$, $d_1\leq d_2\leq \cdots \leq d_r$, of $A$ such that $\aa=L+P$. If this is the case, we call the quotient ring $A/\aa$ a {\it Shakin ring}. 
\end{definition}

\noindent
The following remark is an analogue of Remark \ref{Stab}{\rm (i,ii)}.

\begin{remark}\label{StabP}
Let  $\chara(K)=0$ and let $R= A/\aa$ be a Shakin ring such that $r=n$ and $L_{(n)}=0$. Then we may write $R$ as $\bar R[X_n]/(X_n^{d_n})$, where $\bar R=\bar A /\bar \aa$ is a Shakin ring. By \cite{CaKu1}, proofs of Lemmata 4.1 and 4.2 and by the discussion after Theorem 3.1 in \cite{CaKu2}, we know that, for every homogeneous ideal $I$ of $R$, {\rm (i)} there exists an $X_n$-stable ideal $J$ of $R$  such that $\Hilb(I)=\Hilb(J)$, and {\rm (ii)}  $\beta^A_{ij}(R/I)\leq \beta^A_{ij}(R/J)$ for all $i, j$. {\rm (iii)} In this setting, if $\bar R$ has the lex-embedding, so does $R$ and the proof runs as that of Theorem \ref{Exte} (1): note that $\bar \epsilon$ induces an embedding order on $\bar R$ (\cite[Discussion 2.15]{CaKu1}), which is  a monomial order  in the sense of \cite[Def. 1.2]{CaKu1}; finally by \cite[Thm. 3.11]{CaKu1}, $\epsilon$ induces a monomial order on $R$ as well, which by \cite[Prop. 2.16]{CaKu1} implies that  $\epsilon$ is the lex-embedding.
\end{remark}

\begin{theorem}\label{intermediate}
Let $R=A/\aa$ be a Shakin ring. Then, {\rm (1)} $R$ has the lex-embedding; {\rm (2)} such an embedding is cohomology extremal; {\rm (3)} if $\chara (K)=0$ or $P=0$, then such an embedding is also Betti extremal.
\end{theorem}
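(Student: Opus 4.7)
The plan is to proceed by induction on the number $n$ of variables. First I will reduce parts (1) and (2) to the case $\chara(K)=0$: by Remark \ref{embChar} the existence of a monomial embedding whose image consists of the images of lex-segments of $A$ is independent of the field, and Hilbert functions of local cohomology modules of $A/M$ for $M$ a monomial ideal are likewise field-independent; hence both properties transfer freely between characteristics. Part (3) already stipulates $\chara(K)=0$ whenever $P\neq 0$, and when $P=0$ the recursion below never invokes a quotient by a pure power. So throughout I may and will assume $\chara(K)=0$.

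Given the inductive hypothesis for $\bar R=\bar A/\bar\aa$, where $\bar\aa=\bar L+\bar P$ is obtained from $\aa=L+P$ by restricting $L_1,\ldots,L_{n-1}$ to $\bar A$ and truncating $P$ to $\bar P=(X_1^{d_1},\ldots,X_{r'}^{d_{r'}})$ with $r'=\min(r,n-1)$, I will pass from $\bar R$ to $R$ in three stages. In Stage A, I extend to $\bar R[X_n]$ by Theorem \ref{Exte}, which preserves the lex-embedding together with cohomology and Betti extremality. In Stage B, active only when $r=n$, I form the quotient $\bar R[X_n]/(X_n^{d_n})$; Remark \ref{StabP} takes care of the lex-embedding and Betti extremality in characteristic zero, and I must argue cohomology extremality separately. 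In Stage C, I mod out by the image of the top piecewise-lex piece $L_{(n)}$, which is a lex-segment ideal of $A$ and therefore embedded under the lex-embedding established after Stage B; Remark \ref{embeddeddoso} then produces an induced embedding on $R$ that inherits both extremality properties, and since the preimages of its embedded ideals are still lex-segments of $A$, it is the lex-embedding of $R$.

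The main obstacle will be the cohomology extremality in Stage B, which is not contained in the statement of Remark \ref{StabP}. To handle it, I plan to parallel the proof of Theorem \ref{Exte}(2): given an arbitrary homogeneous ideal $I$ of $\bar R[X_n]/(X_n^{d_n})$, I pass to the $X_n$-stable ideal $J$ supplied by Remark \ref{StabP}(i), establish a finite-exponent analogue of Remark \ref{Stab}(iii) to get $\Hilb(H^i_{\mm_A}(R/I))\leq \Hilb(H^i_{\mm_A}(R/J))$ for all $i$, and then exploit the cohomology extremality of $\bar R[X_n]$ from Stage A across each graded strand $\bar J_{[d]}$ of the $X_n$-stable decomposition of $J$. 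This mirrors the argument used for \cite[Thm.~3.1]{CaSb} in the infinite-exponent setting. Once Stage B is secured, concatenating the three stages yields all three assertions of the theorem.
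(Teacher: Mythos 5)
Your overall induction is the same as the paper's: write $\aa=\bar\aa A+L_{(n)}+Q$, extend the embedding from $\bar R$ to $\bar R[X_n]$ via Theorem \ref{Exte}, quotient by $X_n^{d_n}$ when $r=n$, and then mod out the embedded ideal $L_{(n)}$ using Remark \ref{embeddeddoso}. The problem is Stage B, precisely the step you flag as ``the main obstacle.'' First, the obstacle is illusory, and your proposed fix is left unproved: Stage B is active only when $r=n$, in which case $\bar\aa\supseteq(X_1^{d_1},\dots,X_{n-1}^{d_{n-1}})$, so $S=\bar R[X_n]/(X_n^{d_n})$ is \emph{Artinian}; then the only nonzero local cohomology module of any quotient $S/I$ is $H^0_{\mm_A}(S/I)=S/I$, and since an embedding preserves Hilbert functions, \emph{any} embedding of $S$ is cohomology extremal (this is exactly the observation made in Section 2 of the paper and used in its proof). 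Your plan instead defers the key content to an unestablished ``finite-exponent analogue of Remark \ref{Stab}(iii)'' and a strand-by-strand comparison; as written this is a gap, even though it can be closed trivially by the Artinian remark.

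Second, and more seriously, your blanket reduction to $\chara(K)=0$ is invalid for part (2). Remark \ref{embChar} legitimately transfers part (1) between fields, because the lex-embedding is a statement about monomial ideals and their Hilbert functions. But Hilbert functions of local cohomology modules of quotients by monomial ideals are \emph{not} field-independent (Stanley--Reisner ideals of triangulations of the real projective plane already have characteristic-dependent depth, hence characteristic-dependent $\Hilb(H^i_{\mm_A}(A/M))$), and in any case cohomology extremality quantifies over \emph{all} homogeneous ideals $I$ of $S$, whose local cohomology in characteristic $p$ cannot be compared with any characteristic-zero data. Since Theorem \ref{intermediate}(2) carries no characteristic hypothesis, your argument would at best prove a weaker statement. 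The paper needs no characteristic transfer for (2): Theorem \ref{Exte}(2) and Remark \ref{embeddeddoso} are characteristic-free, and the pure-power quotient step is handled by the Artinian observation above; the field-change trick is invoked only for (1) (because Remark \ref{StabP} is stated in characteristic zero), and characteristic zero enters the theorem only through part (3) when $P\neq 0$.
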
 

\begin{proof} We use induction on the number $n$ of indeterminates. If $n=0$ there is nothing to prove, and if $n=1$   the results are trivial since one can only set $\epsilon(I)=I$ for all $I\in\iP_R$. Let us now assume $n>1$ and $\aa=L+(X_1^{d_1},\dots,X_r^{d_r}),$ where $L$ is a piecewise-lex ideal; we may write $\aa=\bar \aa A + L_{(n)} + Q$ where $\bar \aa$ is a Shakin ideal of $\bar A$,  $L_{(n)}$ is a lex-segment ideal of $A$, whereas  $Q=(0)$ if $r<n$ and $Q=(X_n^{d_n})$ otherwise. By the induction hypothesis, {\rm (1)}, {\rm (2)} and {\rm (3)} hold for $\bar R=\bar A / \bar \aa$, and also for the ring $\bar R[X_n]\simeq A/\bar \aa A$ by Theorem \ref{Exte}. In particular, $A/\bar \aa A$ has the lex-embedding. 

Next, we are going to show that the three claims also hold, when going modulo $Q\neq 0$, for the ring 
$S=A/(\bar\aa A +Q)\simeq \bar R[X_n]/(X_n^{d_n})$. 
In order to prove {\rm (1)}, it is not restrictive to assume $\chara(K)=0$, see Remark \ref{embChar} and {\rm (1)} holds for $S$ by Remark \ref{StabP}(iii).  For the second claim, we only need to say that $S$ is Artinian and, thus, any embedding is cohomology extremal. By Remark \ref{StabP}{\rm (ii)}, \cite[Thm. 3.1]{CaKu2} yields that claim {\rm (3)} holds for $S$.

Finally, since  $R\simeq S/L_{(n)}S$, it is sufficient to observe, as we did in Remark \ref{embeddeddoso}, that the three claims behave well when modding out by an embedded ideal, and $L_{(n)}$ is such, since $S$ has the lex-embedding.
\end{proof}
The previous result, part {\rm (1)} and {\rm (3)}, extends \cite[Thm 3.10, Thm 4.1]{Sh} from piecewise-lex ideals to Shakin ideals. Part {\rm (3)} also  extends \cite[Thm 3.1]{MeMu2}, from pure powers ideals to Shakin ideals. 

We believe that the conclusion of Theorem \ref{intermediate} {\rm (3)} should also hold in positive characteristic.

\section{Distractions and Shakin rings}
A recent result of Abedelfatah on the Eisenbud-Green-Harris conjecture, \cite[Cor. 4.3]{Ab}, can be rephrased as follows: when $\aa\subseteq A$ is a pure powers ideal $(X_1^{d_1},\dots,X_n^{d_n})$ where $d_1\leq \cdots \leq d_n$, then, for every distraction  $D$,  one has $\hP_{A/D(\aa)}\subseteq \hP_{A/\aa}$. By Clements-Lindstr\"om theorem $A/\aa$ has the lex-embedding and therefore, by Remark \ref{udsrd}, $\hP_{A/D(\aa)}=\hP_{A/\aa}$ and $A/D(\aa)$ has an embedding. 
We shall show in Theorem \ref{maindist} (1) that the same result is valid, more generally, for any Shakin ring. 
\vspace{.2cm}

The following result is a simple fact that will be crucial for the proof of Theorem \ref{abed}.

\begin{lemma}[Gluing Hilbert functions]\label{glue} 
 Let $R$ be a ring with an embedding $\epsilon$. Let $\{_dI\}_{d\in \mathbb N}$ be a collection of homogeneous ideals of $R$ with the property that, for all $d$, the Hilbert function of $_dI$ and of $_{d+1}I$ are equal in degree $d+1$. Then, there exists an ideal $L$ such that, for every $d$, the Hilbert functions of $L$ and that of $_dI$ are equal in degree $d$. 
 \end{lemma}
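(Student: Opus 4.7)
The plan is to build $L$ by gluing the degree-$d$ components of canonical representatives $J_d := \epsilon({}_dI)$. First I would apply $\epsilon$ to each ${}_dI$, noting that each $J_d \in \Im(\epsilon)$ has the same Hilbert function as ${}_dI$, so the gluing hypothesis becomes $\Hilb(J_d)_{d+1} = \Hilb(J_{d+1})_{d+1}$. I would then set $V_d := (J_d)_d \subseteq R_d$ and define $L := \bigoplus_{d\geq 0} V_d$ as a graded $K$-subspace of $R$. By construction $\dim_K L_d = \Hilb(J_d)_d = \Hilb({}_dI)_d$, so matching Hilbert functions reduces to verifying that $L$ is $R$-stable, i.e.\ $R_1 \cdot V_d \subseteq V_{d+1}$ for every $d$.

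The hard part is exactly this closure condition. From $V_d = (J_d)_d$ one immediately obtains $R_1 \cdot V_d \subseteq (J_d)_{d+1}$, a subspace of $R_{d+1}$ whose $K$-dimension equals that of $V_{d+1} = (J_{d+1})_{d+1}$ in view of the gluing hypothesis. So the closure will follow as soon as one proves the identification $(J_d)_{d+1} = (J_{d+1})_{d+1}$. This is a rigidity property of embedded ideals: two members of $\Im(\epsilon)$ sharing the same Hilbert value at a given degree share the same graded component in that degree.

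To secure this rigidity, I would first appeal to Remark \ref{embChar} and replace $\epsilon$ with a monomial embedding without loss of generality, so that each $J_d$ is a monomial ideal. I would then invoke the embedding-order formalism of \cite[Discussion 2.15]{CaKu1}, according to which the degree-$e$ graded component of every $J \in \Im(\epsilon)$ is characterized as the initial set of size $\Hilb(J)_e$ with respect to the embedding order in degree $e$; two initial sets of equal size in the same degree coincide. Since $\Hilb(J_d)_{d+1} = \Hilb(J_{d+1})_{d+1}$, this yields $(J_d)_{d+1} = (J_{d+1})_{d+1}$, and hence $R_1 \cdot V_d \subseteq V_{d+1}$ as required. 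Therefore $L$ is a well-defined homogeneous ideal of $R$ with $\Hilb(L)_d = \Hilb({}_dI)_d$ for every $d$, proving the lemma.
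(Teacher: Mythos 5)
Your construction is essentially the paper's own proof: apply $\epsilon$ to each ${}_dI$, use the fact that embedded ideals whose Hilbert functions agree in a given degree have identical graded components in that degree, and glue the degree-$d$ pieces $\epsilon({}_dI)_d$ into the ideal $L$. The only difference is in how the key rigidity step is justified: the paper quotes it directly from \cite[Lemma 2.1]{CaKu1}, whereas you derive it by passing to a monomial embedding and invoking the embedding-order formalism of \cite[Discussion 2.15]{CaKu1} --- a heavier detour (that formalism itself rests on the same Lemma 2.1), but not a genuinely different argument.
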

 \begin{proof}
 By \cite[Lemma 2.1]{CaKu1}, the ideals  $\epsilon(_dI)$ and  $\epsilon(_{d+1}I)$ coincide in degree $d+1$. Thus, $ R_1\epsilon(_dI)_d  \subseteq \epsilon(_dI)_{d+1}=\epsilon(_{d+1}I)_{d+1}$ and the direct sum $\bigoplus_{d\in \mathbb N} \epsilon(_dI)_d$ of the vector spaces $\epsilon(_dI)_d$, $d\in\NN$,  is the ideal $L$ of $R$ we were looking for.
 \end{proof}

As before, we let $e\in \NN\cup\{\infty\}$ and, when $e=\infty$, we let the ideal $(X_{n}^e)$ denote the zero ideal. 

\begin{theorem}\label{abed} Let $\bar \aa \subseteq \bar A=K[X_1,\dots,X_{n-1}]$ be a monomial ideal, $\bar R= \bar A/\bar \aa$. Let also $A=\bar A[X_{n}]$,  $\aa\subseteq A$ be the ideal $\bar \aa A+X_{n}^eA$, and $R=A/\aa$. Suppose that  $\bar R$ and $R$ have  embeddings.
If $\hP_{\bar A/\bar D( \bar \aa)}= \hP_{\bar A/\bar \aa}$ for every distraction  $\bar D$ of $\bar A$, then  $\hP_{A/D(\aa)}= \hP_{A/\aa}$ for every distraction  $D$ of $A$. 
\end{theorem}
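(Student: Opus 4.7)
The inclusion $\hP_{A/\aa} \subseteq \hP_{A/D(\aa)}$ is automatic from Remark \ref{udsrd}, so the content of the theorem is the reverse inclusion. Given a homogeneous ideal $I \subseteq A$ with $I \supseteq D(\aa)$, the goal is to produce a homogeneous $J \subseteq A$ with $J \supseteq \aa$ and $\Hilb(A/J) = \Hilb(A/I)$. The plan is to assemble $J$ degree by degree through the Gluing Hilbert Functions Lemma \ref{glue} applied to an embedding of $R$: for each $d \geq 0$ we construct an ideal $_d\!J$ of $R$ with $\Hilb(_d\!J)_d$ matching the Hilbert function of $I/D(\aa)$ in degree $d$ (a meaningful comparison since $\Hilb(A/\aa) = \Hilb(A/D(\aa))$) and satisfying $\Hilb(_d\!J)_{d+1} = \Hilb(_{d+1}\!J)_{d+1}$; the lemma then yields the desired $J$.

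The construction of $_d\!J$ exploits the following reduction: for any entry $\ell = l_{n,k}$ of the last row of $D$, the images modulo $\ell$ of the remaining entries $l_{ij}$ ($i \leq n-1$) form a distraction $\bar D_\ell$ of $\bar A \cong A/(\ell)$. Indeed, property (i) of $D$ applied with $j_n = k$ says that $l_{1,j_1},\ldots,l_{n-1,j_{n-1}},\ell$ span $A_1$ for every $(j_1,\ldots,j_{n-1})$, so their images span $\bar A_1$; property (ii) descends as well. Moreover the image of $D(\bar\aa)A$ modulo $\ell$ coincides with $\bar D_\ell(\bar\aa)$. Fixing $d$ and setting $\ell = l_{n,d+1}$, the short exact sequence
\[
0 \to (A/(I:\ell))(-1) \xrightarrow{\,\cdot \ell\,} A/I \to A/(I+(\ell)) \to 0
\]
decomposes $\Hilb(A/I)_d$ into $\Hilb(A/(I+(\ell)))_d$ and $\Hilb(A/(I:\ell))_{d-1}$. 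The hypothesis applied to $\bar D_\ell$ realizes the first summand as the Hilbert function of an ideal $\bar J^{(d)} \supseteq \bar\aa$ of $\bar A$; a parallel argument by induction on the exponent $e$ (via the containment $D(\aa):\ell \supseteq D(\bar\aa A + (X_n^{e-1}))$) handles the colon ideal $I:\ell$, producing an ideal $K^{(d)} \supseteq \bar J^{(d)} A + (X_n^{e-1})$ with $\Hilb(A/K^{(d)}) = \Hilb(A/(I:\ell))$. Combining these, the ideal $_d\!J := \bar J^{(d)} A + X_n K^{(d)} \supseteq \aa$ has, via the short exact sequence for multiplication by $X_n$, Hilbert function matching $\Hilb(A/I)$ in degree $d$.

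The main obstacle is ensuring the degree-$(d+1)$ gluing compatibility $\Hilb(_d\!J)_{d+1} = \Hilb(_{d+1}\!J)_{d+1}$. This requires the choices made at each $d$ to cohere as $d$ varies, which is delicate because the linear form $\ell = l_{n,d+1}$ itself varies. Here the embedding $\bar\epsilon$ of $\bar R$ is crucial: it provides a canonical choice of $\bar J^{(d)}$ (and, by the induction, of $K^{(d)}$) for each prescribed Hilbert function, so that the $_d\!J$'s fit together coherently. A further technical subtlety is that the induction on $e$ has to be applicable to the modified ideal $\bar J^{(d)} A + (X_n^{e-1})$ in place of $\bar\aa A + (X_n^{e-1})$; this is handled via the embedded-quotient construction of Remark \ref{embeddeddoso}, using that $\bar J^{(d)}$ lies in the image of $\bar\epsilon$.
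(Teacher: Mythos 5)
Your skeleton --- slicing by linear forms from the last row of $D$, passing to the induced distraction of $\bar A$ (that step is correct, and is Remark \ref{udsre} of the paper), lifting through $\bar\epsilon$, and assembling the result with Lemma \ref{glue} --- is the same as the paper's strategy for finite $e$, but the two points where the real work happens are exactly the points you leave as assertions, and as written both fail. The crux is the compatibility $\Hilb({}_{d}J)_{d+1}=\Hilb({}_{d+1}J)_{d+1}$: saying that the embedding ``provides canonical choices so that the ${}_{d}J$'s cohere'' is not an argument, because ${}_{d}J$ is built from the decomposition of $\Hilb(A/I)$ with respect to $\ell=l_{n,d+1}$ while ${}_{d+1}J$ uses $l_{n,d+2}$, and canonicity of $\bar\epsilon$ gives no comparison between these two decompositions in degree $d+1$. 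The paper resolves this by a mechanism absent from your proposal: for fixed $d$ it colons by \emph{all} the factors $l_1,\dots,l_e$ of $D(X_n^e)$, reordered (in a $d$-dependent way) so that each $\dim_K\bigl(J:\prod_{i<h}l_i+(l_h)\bigr)$ is minimal in the relevant degree; this produces the inequalities \eqref{EQ2}, which, after applying $\bar\epsilon$ and \cite[Lemma 2.1]{CaKu1}, become inclusions of the graded components of the embedded lifts $\bar L_{[h]}$ in degrees $d-h+1$, so that $N=\bigoplus_h\bar L_{[h]}X_n^h$ is closed under multiplication there and the ideal generated by $N_d$, $N_{d+1}$ and $\aa$ agrees with $J$ in \emph{both} degrees $d$ and $d+1$; only then is the hypothesis of Lemma \ref{glue} automatic. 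Matching in degree $d$ alone, which is all your construction gives, does not suffice.

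The treatment of the colon ideal is also gapped. First, the containment $D(\aa):\ell\supseteq D'(\bar\aa A+X_n^{e-1}A)$ requires $\ell$ to be one of the actual factors $l_{n,1},\dots,l_{n,e}$ of $D(X_n^e)$; your choice $\ell=l_{n,d+1}$ is not such a factor once $d+1>e$, so the drop from $e$ to $e-1$ is unavailable for all large $d$. Second, even with a correct choice of $\ell$, your induction on $e$ is applied to rings of the form $A/(\bar J^{(d)}A+X_n^{e-1}A)$, and running your own argument for them again requires those rings to have embeddings (you need Lemma \ref{glue} at that level too); the theorem only assumes embeddings for $\bar R$ and $R$, and Remark \ref{embeddeddoso} does not supply the missing ones, since the image of $\bar J^{(d)}A+X_n^{e-1}A$ in $R$ is not known to lie in $\Im(\epsilon)$ (the embedding of $R$ is arbitrary here, not assumed to be the lex-embedding). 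Third, for $e=\infty$ the induction has no terminating parameter; the paper handles that case by a separate direct argument --- take $\ini_{\omega}(gJ)$ with $\omega=(1,\dots,1,0)$ and $gD(X_n)=X_n$, observe that the slices $\bar J_{[i]}$ form a chain containing $\bar D(\bar\aa)$, and lift that chain through $\bar\epsilon$ --- which needs no gluing at all. So the proposal needs the reordering/monotonicity mechanism for finite $e$ and a genuinely different argument for $e=\infty$ before it can be considered a proof.
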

\begin{proof} Since $\bar \aa$ is a monomial ideal, by Remark \ref{embChar} we may assume that $\bar R$ has a  monomial embedding $\bar \epsilon$.
 Let us denote by $\epsilon$ the embedding of $R$.  By Remark \ref{udsrd}, it is enough to show  that, for every distraction   $D$, one has $\hP_{A/D(\aa)} \subseteq \hP_{A/(\aa)}$.  Let $I$ be an ideal of   ${A/D(\aa)}$ and let $J$ be its pre-image in $A$, we are going to show that  there exists an ideal $L$ of $A$, which contains $\aa$ and with the same Hilbert function as $J$.

\noindent
\framebox{$e=\infty$} Let ${\bf \omega}$ be the weight vector $(1,1,\ldots,1,0)$ and fix a change of coordinates $g$ such that  $gD(X_{n})=X_{n}$. 
Now, we decompose the ideal $\ini_{\bf \omega}(gJ)$ of $A$ as the (not finitely generated) $\bar A$-module $\ini_{\bf \omega}(gJ)=\bar J_{[0]} \oplus \bar J_{[1]}X_{n} \oplus \cdots \oplus \bar J_{[i]}X_{n}^i\oplus\cdots.$ 

It is a standard observation that, for all $i$, the ideal $\bar J_{[i]}$ is the image in $\bar A$ of the homogeneous ideal $ {(gJ):X_{n}^i}$ under the map evaluating $X_n$ at $0.$ In particular $\bar J_{[i]}\subseteq \bar J_{[i+1]}$, for all $i$. 

\begin{remark}\label{udsre}
Notice that $X_n$ is an entry of the last row of the distraction $gD.$ 
Thus, one can easily verify that, if we map all the entries of $gD$ to $\bar A$ evaluating $X_n$ at zero, we get a matrix whose first $n-1$ rows form a distraction of $\bar A$; we denote it by $\bar D$. Since $\bar D (\bar \aa)$ is the image in $\bar A$ of $gD(\aa)$, and $D(\aa)\subseteq J$, we have that $\bar D(\bar \aa)\subseteq \bar J_{[0]}\subseteq \cdots\subseteq \bar J_{[i]}\subseteq \cdots$.
\end{remark}

We can now continue with the proof of the theorem.
The above chain determines a chain of ideals in $\bar A/\bar D(\bar \aa)$, and thus a chain of elements in  $\hP_{\bar A/\bar D(\bar \aa)}=\hP_{\bar A/(\bar \aa)}$. By applying the embedding $\bar \epsilon$ of $\bar A/\bar \aa$ to the latter and lifting the resulting chain in $\iP_{\bar A/\bar\aa}$ to a chain in $\iP_{\bar A}$, we get $\bar \aa \subseteq \bar L_{[0]}\subseteq\cdots\subset \bar L_{[i]} \subseteq \cdots$, where $\bar L_{[i]}$ is a monomial ideal of $\bar A$ for all $i$.  Now, the ideal $L:=\bar L_{[0]} \oplus \bar L_{[1]}X_{n} \oplus \cdots$ contains $\aa$ and has the desired Hilbert function.

\noindent
\framebox{$e\in \mathbb N$} We may let $e\geq 1$, since the conclusion is trivial for $e=0$. 
By Lemma \ref{glue}, is enough to show that,  
for every positive integer $d$, there exists an ideal $L\supseteq \aa$ of $A$ whose Hilbert function agrees with the one of $J$ in degrees $d$ and $d+1$.

Let $d$ be a fixed positive integer and let $D(X_{n}^e)=l_1\cdots l_e$,  where $l_i\in A_1$ for $i=1,\ldots,e$.  By re-arranging these linear forms, if necessary, we may assume without loss of generality that
$$\dim_K(J+(l_1))_{d+1} \leq \dim_K(J+(l_j))_{d+1} \quad  \text{ for all }j=2,\dots,e,$$
and, recursively, that, for $h=1,\ldots,e$, 
\begin{equation*}\label{EQ1}
\dim_K(J: \prod_{i=1}^{h-1} l_i + (l_h))_{d-h+2}  \leq 
\dim_K(J: \prod_{i=1}^{h-1} l_i + (l_j))_{d-h+2},  
\end{equation*} for all $j=h+1,\dots,e$.
The latter equation implies that 

\begin{equation}\label{EQ2} 
\dim_K(J: \prod_{i=1}^{h-1} l_i + (l_h))_{d-h+2}  \leq 
\dim_K(J: \prod_{i=1}^{h-1} l_i + (l_{h+1}))_{d-h+2} \leq  \dim_K(J: \prod_{i=1}^{h} l_i + (l_{h+1}))_{d-h+2} 
\end{equation} for $h=1,\dots,e-1$. 
Furthermore, for $h=1,\ldots,e$, we have short exact sequences
$$0\lra \left(A/(J:\prod_{i=1}^hl_i)\right)(-1) \stackrel{\cdot l_h}{\lra} A/ (J:\prod_{i=1}^{h-1}l_i) \lra A/ (J:\prod_{i=1}^{h-1}l_i+(l_h)) \lra 0.$$ Notice that $(J:\prod_{i=1}^{e}l_i)=A.$
The additivity of Hilbert function for short exact sequences, thus, implies that the Hilbert function of $J$ can be computed  by means of those of $J:\prod_{i=1}^{h-1}l_i+(l_h)$, $h=1,\ldots, e$, and that of $A$.

For every $h=1,\dots,e$, we let $g_h$ be a change of coordinates of $A$ such that $g_h(l_{h})=X_{n}$, and we denote by $\bar J_{[h-1]}$ the image of $g_h (J: \prod_{i=1}^{h-1} l_i+(l_h))$ in $\bar A$; for all $h\geq e$, we also set $\bar J_{[h]}=\bar A$. 
With these assignments, one verifies that the Hilbert function of $J$ is the same as the Hilbert function of the $\bar A$-module $\bar J_{[0]} \oplus \bar J_{[1]}X_{n} \oplus \cdots$; the difference with the case $e=\infty$ is that we cannot conclude that $\bar J_{[i]}\subseteq \bar J_{[i+1]}$ for all $i\geq 0$, but \eqref{EQ2} yields that
$ \dim_K (\bar J_{[h]})_{d-h+1}\leq \dim_K  (\bar J_{[h+1]})_{d-h+1}$ for all $h=0,\dots, e-1$. Since the inequality is also true for $h\geq e$, we may conclude that
\begin{equation*}
 \dim_K (\bar J_{[h]})_{d-h+1}\leq \dim_K  (\bar J_{[h+1]})_{d-h+1} \quad \quad \text{  for all } h\geq 0.
\end{equation*}
 
Furthermore, by Remark \ref{udsre} applied to $g_hD$, for all $h\geq 1$,  there exists a distraction $\bar D_h$ of $\bar A$ such that $\bar D_h (\bar \aa) \subseteq \bar J_{[h-1]}$. 
Since $\hP_{\bar A/\bar D_h(\bar \aa)}= \hP_{\bar A/\bar \aa}$ by hypothesis, and $\bar A/\bar \aa$ has an embedding  $\bar \epsilon$, we can let, for all $h\geq 1$,  $\bar L_{[h-1]}$ be the pre-image in $\bar A$ of $\bar \epsilon(\bar J_{[h-1]})$; therefore $\bar L_{[h]}\supseteq \bar\aa$ and  $\dim_K (\bar L_{[h]})_{d-h+1}\leq \dim_K (\bar L_{[h+1]})_{d-h+1}$ for all $h\geq 0$. By \cite[Lemma 2.1]{CaKu1}, any homogeneous component of an embedded ideal is uniquely determined by the value of the given Hilbert function in that degree, hence 
\begin{equation}\label{EQ4}
(\bar L_{[h]})_{d-h+1}\subseteq (\bar L_{[h+1]})_{d-h+1} \quad \quad \text{  for all } h\geq 0.
\end{equation}

We thus can define the $\bar A$-module $N=\bar L_{[0]} \oplus \bar L_{[1]}X_{n} \oplus \cdots \subseteq A$, and, by \eqref{EQ4},  $\mm_A N_d =(\mm_{\bar A}+X_n)N_d \subseteq N_{d+1}$. Furthermore since $\aa=\bar \aa A+X_n^eA$, $\bar \aa \subseteq \bar L_{[h]}$ for all $h\geq 0$  and $\bar L_{[h]}=\bar A$ for all $h\geq e$, we have that $\aa \subseteq N.$ We let $L$ be the ideal of $A$ generated by $N_d$, $N_{d+1}$ and $\aa$ and we notice that  $L_d=N_d$ and $L_{d+1}=N_{d+1}.$ Finally $L\supseteq \aa$ is the desired ideal because its Hilbert function agrees with that of $J$ in degree $d$ and $d+1.$
\end{proof}

Let as before $\bar A=K[X_1,\ldots,X_{n-1}]$, and let $\bar S=\bar A/\bar \bb$ and $\bar R=\bar A/\bar \aa$ be standard graded algebras such that $\bar R$ has an embedding $\bar \epsilon$ and $\hP_{\bar S}\subseteq \hP_{\bar R}$, so that, as in Section 2, we can consider the triplet $(\bar S,\bar R,\bar\epsilon)$. In the proof of the following theorem we shall need a technical result about extension of embeddings we proved in \cite[Thm. 3.1]{CaSb}: if  $(\bar S, \bar R, \bar\epsilon)$ is cohomology extremal, then $(\bar S[X_n], \bar R[X_n],\epsilon)$ is cohomology extremal,  where $\epsilon$ is the usual extension of the embedding $\bar \epsilon$ to $\bar R[X_n]$ which has been consider throughout this paper.

We are now ready to present our main theorem about distractions of Shakin rings; its proof follows the outline of that of Theorem \ref{intermediate} and makes use of Theorem \ref{abed}. The reader should keep in mind the construction of an embedding induced by a distraction we presented in Remark \ref{udsrd}.

\begin{theorem}\label{maindist} Let $A/\aa$ be a Shakin ring and $\aa=L+P$. Then, for every distraction $D$ of $A$, one has: {\rm (1)} $\hP_{A/D(\aa)}= \hP_{A/\aa}$ and $A/D(\aa)$ has an embedding $\epsilon_D$; {\rm (2)} the embeddings $(A/D(\aa),\epsilon_D)$ and  $(A/D(\aa),A/\aa,\epsilon)$ are cohomology extremal; {\rm (3)} if $P=0$, i.e. $\aa=L$ is a piecewise-lex ideal, then $(A/D(\aa),\epsilon_D)$  and  $(A/D(\aa),A/\aa,\epsilon)$ are Betti extremal.
\end{theorem}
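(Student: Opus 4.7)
The plan is to prove Theorem \ref{maindist} by induction on $n$, mirroring the structure of Theorem \ref{intermediate} and invoking Theorem \ref{abed} at the key bridging step. The base case $n \leq 1$ is trivial. For the inductive step, decompose $\aa = \bar\aa A + L_{(n)} + Q$ as in the proof of Theorem \ref{intermediate}, where $\bar\aa$ is a Shakin ideal of $\bar A = K[X_1,\ldots,X_{n-1}]$, $L_{(n)}$ is a lex-segment of $A$, and $Q = (X_n^{d_n})$ if $r = n$, $Q = (0)$ otherwise. Set $\aa' = \bar\aa A + Q$. By the induction hypothesis, claims (1)--(3) hold for $\bar R = \bar A/\bar\aa$ relative to every distraction of $\bar A$; moreover, by Theorem \ref{intermediate}, both $\bar R$ and $A/\aa'$ carry the lex-embedding.

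First, for (1) applied to $\aa'$: the hypothesis of Theorem \ref{abed} is exactly IH(1), and its conclusion with $e = d_n$ (resp.\ $e = \infty$) yields $\hP_{A/D(\aa')} = \hP_{A/\aa'}$ for every distraction $D$ of $A$, so $A/D(\aa')$ acquires the induced embedding $\epsilon'_D$ of Remark \ref{udsrd}. For (2) and (3) applied to $\aa'$, I would combine IH(2,3) with the extension theorem \cite[Thm.~3.1]{CaSb} (and \cite[Thm.~3.1]{CaKu2} for Betti): these lift a cohomology (resp.\ Betti) extremal triplet $(\bar S,\bar R,\bar\epsilon)$ to $(\bar S[X_n],\bar R[X_n],\epsilon)$, yielding the desired extremality over the polynomial extension $\bar R[X_n] \simeq A/\bar\aa A$; one then descends from $\bar R[X_n]$ to $A/\aa' \simeq \bar R[X_n]/(X_n^{d_n})$ through an adaptation of Remark \ref{StabP} to the distracted setting, via the $X_n$-stable ideal technique. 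The triplet $(A/D(\aa'),A/\aa',\epsilon')$ thus obtained implies the self-referential $(A/D(\aa'),\epsilon'_D)$: for cohomology, Proposition \ref{codistra} gives $\Hilb(H^i_{\mm_A}(A/\tilde J))_j \leq \Hilb(H^i_{\mm_A}(A/D(\tilde J)))_j$ for the monomial lex ideal $\tilde J$ produced by $\epsilon'$; for Betti, Remark \ref{bedistra} makes the two triplet versions equivalent, since $\beta^A_{ij}(A/\tilde J) = \beta^A_{ij}(A/D(\tilde J))$.

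Finally, to descend from $\aa'$ to $\aa = \aa' + L_{(n)}$: since $A/\aa'$ has the lex-embedding and $L_{(n)}$ is a lex-segment of $A$, its image in $A/\aa'$ is embedded under $\epsilon'$; by the construction of $\epsilon'_D$ from Remark \ref{udsrd}, the image of $D(L_{(n)})$ in $A/D(\aa')$ is likewise embedded under $\epsilon'_D$. Applying Remark \ref{embeddeddoso} in both its single-ring and triplet forms, the quotient $A/D(\aa) = (A/D(\aa'))/D(L_{(n)})$ inherits claims (1)--(3), completing the induction. The main obstacle lies in the middle step: \cite[Thm.~3.1]{CaSb} in its raw form only yields extremality for \emph{trivially extended} distractions of the form ``a distraction $\bar D$ of $\bar A$ padded with a last row of $X_n$'s'', so handling an arbitrary distraction $D$ of $A$ requires the full change-of-coordinates construction from Theorem \ref{abed} and Remark \ref{udsre}, together with careful bookkeeping of how $Q = (X_n^{d_n})$ transforms under $D$.
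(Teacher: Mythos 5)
Your skeleton agrees with the paper's: induction on $n$, the decomposition $\aa=\bar\aa A+L_{(n)}+Q$, Theorem \ref{abed} together with Remark \ref{udsrd} for claim (1), the reduction modulo the embedded ideal $L_{(n)}$ via Remark \ref{embeddeddoso}, and the final conversion of triplet extremality into extremality of $(A/D(\aa),\epsilon_D)$ by means of Proposition \ref{codistra} and Remark \ref{bedistra}. But the heart of (2) and (3) --- proving that $(A/D(\aa),A/\aa,\epsilon)$ is cohomology (resp.\ Betti) extremal for an \emph{arbitrary} distraction $D$ of $A$ in the essential case $Q=(0)$ --- is precisely what you label ``the main obstacle'' and then leave unresolved, so there is a genuine gap. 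The lifting results \cite[Thm.~3.1]{CaSb} and \cite[Thm.~3.1]{CaKu2} only produce extremality relative to rings of the form $A/\bar D(\bar\aa)A$, i.e.\ for distractions of $\bar A$ padded with a last row of $X_n$'s, and no bookkeeping alone bridges this to a general $D$. The paper's bridge consists of several steps absent from your proposal: for $I\subseteq A/D(\aa)$ with preimage $J$, choose $g$ with $gD(X_n)=X_n$ and pass to $\ini_\omega(gJ)$, $\omega=(1,\dots,1,0)$, comparing $A/J$ with $A/\ini_\omega(gJ)$ by upper semicontinuity of Betti numbers and by \cite[Thm.~2.4]{Sb1} for local cohomology (neither of which you invoke); note that $\ini_\omega(gJ)\supseteq \bar D(\bar\aa)A$, so the extended embedding $\epsilon_{\bar D}$ of $A/\bar D(\bar\aa)A$ applies to the image $I_{\bar D}$ of $\ini_\omega(gJ)$; and, crucially, recognize the preimage of $\epsilon_{\bar D}(I_{\bar D})$ in $A$ as $D'(U)$, where $D'$ is the padded distraction and $U\supseteq\aa$ is a monomial ideal --- this is the device that transports the estimate back to the Shakin ring $A/\aa$ with Betti numbers preserved (Remark \ref{bedistra}) and Hilbert function unchanged, after which $\epsilon(UR)=\epsilon(I)$ and a last application of Remark \ref{bedistra}, resp.\ Proposition \ref{codistra}, to the preimage of $\epsilon(UR)$ and its distraction $D(\cdot)$ yields both forms of extremality. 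Without this construction your argument establishes extremality only for the special distractions $D'$, not for all $D$.

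A secondary issue is your treatment of the case $Q=(X_n^{d_n})\neq 0$, where you propose to descend from $\bar R[X_n]$ to $\bar R[X_n]/(X_n^{d_n})$ by ``an adaptation of Remark \ref{StabP} to the distracted setting.'' This is both unnecessary and unlikely to work as stated: if $Q\neq 0$ then $r=n$ and all the rings involved are Artinian, so every embedding and every triplet is trivially cohomology extremal, while the hypothesis $P=0$ of (3) forces $Q=0$, so (3) is vacuous there --- this is exactly how the paper dispatches that case. Moreover, the $X_n$-stable ideal machinery of Remark \ref{StabP} is formulated for quotients by monomial ideals (and, for Betti numbers, in characteristic $0$); transplanting it to $A/D(\aa')$, whose defining ideal is not monomial, would require substantial new arguments that your proposal does not provide.
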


\begin{proof} For clarity's sake, we split the proof in several steps.

(a) We induct on $n$. If $n=0, 1$ the results are trivial. Let $P=(X_1^{d_1},\dots,X_r^{d_r})$ and $d_1\leq\cdots\leq d_r.$ We may write $\aa$ as $\bar \aa A + L_{(n)} + Q$, where $\bar \aa$ is a Shakin ideal of $\bar A$,  $L_{(n)}$ is a lex-segment ideal of $A$ and $Q=(0)$ if $r<n$ or $Q=X_n^{d_n}A$ otherwise. Notice that the Shakin ring $\bar R=\bar A / \bar \aa$ has the lex-embedding, say $\bar\epsilon$, by Theorem \ref{intermediate}; and, by the inductive hypothesis, it  satisfies {\rm (1)}, {\rm (2)}, and {\rm (3)}. 

(b) We let now $R$ be the Shakin ring $R=A/(\bar \aa A +Q)$. By Theorem \ref{intermediate}, $R$ has the lex-embedding, say $\epsilon$ and, therefore,  $L_{(n)}R$ is embedded. For any distraction $D$ of $A$, let $R_D:=A/D(\bar \aa A +Q)$: 
if  $\hP_{R_D}=\hP_R$, then the image of $D(L_{(n)})$ in $R_D$ is also embedded via $\epsilon_D$, i.e. it belongs in $\Im(\epsilon_D)$ - cf. Remark \ref{udsrd}. Thus, by virtue of Remark  \ref{embeddeddoso}, in order to conclude the theorem, it is enough to prove (1), (2), and (3) for $R$. Without loss of generality we assume $\aa=\bar \aa A +Q.$

(c) By step (a), we may apply Theorem \ref{abed} to $\bar R$ and $R=\bar R [X_n]/(Q)$, hence Remark \ref{udsrd}  yields that {\rm (1)} holds for $R$. If $Q\not =0$, then $R$ is Artinian and so is $R_D$; therefore,  $(R,\epsilon)$, 
$(R_D,\epsilon_D)$ and $(R_D,R,\epsilon)$ are all trivially cohomology extremal, and, hence, {\rm (2)} is satisfied in this case.
Finally, notice that the hypothesis of {\rm (3)} is not satisfied when $Q\not = 0.$ 
Therefore, from now on, without loss of generality, we will assume that $Q=(0)$ so that $\aa= \bar \aa A$.

 (d) Let $D$ to be a distraction of $A$, $R=A/\aa=A/\bar \aa A,$ and $R_D=A/D(\aa)$;
 we are left to prove (2) and (3) for $R$. 
  We do so by proceeding as in the proof of Theorem \ref{abed}: we fix a homogeneous ideal $I$ of $R_D$,  denote by $J$ the pre-image of $I$ in $A$ and let ${\bf \omega}$ be the weight vector $(1,1,\ldots,1,0)$; furthermore, we fix a change of coordinates $g$ such that  $gD(X_{n})=X_{n}$, decompose the $\bar A$-module $\ini_{\bf \omega}(gJ)$ as $\ini_{\bf \omega}(gJ)=\bar J_{[0]} \oplus \bar J_{[1]}X_{n} \oplus \cdots \oplus \bar J_{[i]}X_{n}^i\oplus\cdots$ and observe that, by a standard upper-semi-continuity argument for Betti numbers and by \cite[Thm. 2.4]{Sb1},
\begin{equation*}\begin{split}
\beta_{ij}^A(R_D/I) \;=\; \beta_{ij}^A(A/J) \;\leq\;  \beta_{ij}^A(A/\ini_{\bf \omega}(gJ)) \hbox{\;\;\;\;\;\;\;\;\;\;\;\;\;\;\;\;\;\;for all $i, j$, and}\\
\Hilb \left(H^i_{\mm_A} (R_D/I)\right)_j \;=\; \Hilb \left(H^i_{\mm_A} (A/J)\right)_j \;\leq\; \Hilb
\left( H^i_{\mm_A}(A/\ini_{\bf \omega}(gJ) )\right)_j \hbox {\;\;\;\;\;\;for all\;} i, j .
\end{split}
\end{equation*}
Now, as in Remark \ref{udsre}, we denote by $\bar D$ the distraction of $\bar A$ obtained by considering the images in $\bar A$ of the first $n-1$ rows of $gD$, observe that $\bar D(\bar \aa)$ is the image, under the evaluation of $X_n$ at $0$, of $gD(\aa)$ in $\bar A$, and obtain that $\bar D(\bar \aa) \subseteq  \bar J_{[0]}$. Finally, $\bar D(\bar \aa)A \subseteq \ini_{\bf \omega}(gJ)$ and, hence,  $A/\ini_{\bf \omega}(gJ)$ is a quotient ring of  $R_{\bar D}:=A/\bar D(\bar \aa)A$. We denote by $I_{\bar D}$ the image of $\ini_\omega(gJ)$ in $R_{\bar D}$ and observe that $I$ and $I_{\bar D}$ have the same Hilbert function.

(e) By the inductive hypothesis there exists the embedding $\bar \epsilon _{\bar D}$ of $\bar R_{\bar D}:= \bar A / \bar D(\bar \aa)$ induced by 
 $\bar D$ and by the lex-embedding $\bar \epsilon$ of $\bar R$ (see (a)); moreover, both $(\bar R _{\bar D},\bar \epsilon_{\bar D})$ and $(\bar R_{\bar D}, \bar R, \bar \epsilon)$ are Betti and  cohomology extremal. Let $\epsilon_{\bar D}$ and $\epsilon$ denote the extensions of  embeddings $\bar \epsilon_{\bar D}$ and $\bar \epsilon$ to the rings  $\bar R_{\bar D}[X_n]= R_{\bar D}$ and $ \bar R[X_n]=R$ yielded by Theorem \ref{Exte}, respectively. Notice that $\epsilon$ is the lex-embedding, both $(R_{\bar D},\epsilon_{\bar D})$ and $(R,\epsilon)$ are Betti extremal, cohomology extremal, and by \cite[Theorem 3.1]{CaSb}, also $(R_{\bar D}, R, \epsilon)$ is cohomology extremal. 

(f) Since $\epsilon_{\bar D}$ is yielded by Theorem \ref{Exte},  we know that $\epsilon_{\bar D}(I_{\bar D})$ is the direct sum  $\bigoplus_d  \bar L_{[d]}X_n^d$ where each $\bar L_{[d]}$ is an embedded ideal of $\bar A / \bar D(\bar \aa)$ via $\bar \epsilon _{\bar D}.$ If we denote by $J_{\bar D}$ the pre-image of $\epsilon_{\bar D}(I_{\bar D})$ in $A$, we see that $J_{\bar D}$ can be written as
$\bigoplus_d  \bar J_{[d]}X_n^d$, where each $\bar J_{[d]}$ is the pre-image in $\bar A$ of $\bar L_{[d]}.$
Thus each $\bar J_{[d]}$ is the image, under $\bar D$, of a monomial ideal of $\bar A$ which is the sum of $\bar \aa$ and a lex-segment ideal of $\bar A.$ 
Viewing $\bar A$ as a subring of $A$, we may let $D'$ be the matrix obtained by adding to $\bar D$ a bottom row in which every entry is $X_n$; we observe that $D'$ is a distraction of $A$ and $J_{\bar D}=D'(U)$, for some monomial ideal $U$ such that $\bar\aa A=\aa\subseteq U$. Thus, $U$ has the same Hilbert function of $J$.

(g) We can now prove (3); we proceed as in the following diagram.\\

\vspace{-.5cm}
$$\begin{CD}\label{pathtoext} J\in\iP_A @>\ini_\omega\circ\,g>> \ini_\omega(gJ)\in\iP_A @. J_{\bar D}=D'(U)\in\iP_A @<D'<< U\in\iP_A @. L'\in\iP_A\\
@A \pi^{-1} AA  @VV \pi  V  @A\pi^{-1} AA @V\pi VV @A \pi^{-1} AA \\
I\in\iP_{R_D} @. I_{\bar D}\in\iP_{R_{\bar D}} @>\epsilon_{\bar D}>> \epsilon_{\bar D}(I_{\bar D})\in\iP_{R_{\bar D}} @. UR\in\iP_R @>\epsilon>> \epsilon(UR)\in\iP_R.
\end{CD}
$$
$$\text{Fig. 1.:\;\; $I$ has the  same Hilbert function as $\epsilon(UR)$}.$$\\

\vspace{-.5cm}

Since, by (e),  $(R_{\bar D},\epsilon_{\bar D})$ and $(R,\epsilon)$ are Betti extremal, and graded Betti numbers of monomial ideals do not change by applying a distraction - cf. Remark \ref{bedistra} -  we have, for all $i,j$ :
\[
\beta_{ij}^A(R_D/I) \;\leq\;\beta_{ij}^A(A/\ini_{\bf \omega}(gJ)) \;\leq\; \beta_{ij}^A(A/J_{\bar D}) \;=\; \beta_{ij}^A(A/U) \;\leq\; \beta_{ij}^A(R/\epsilon(UR))= \beta_{ij}^A(A/L'),
\]
where $L'$ be the pre-image of $\epsilon(UR)$ in $A$. The ideal $L'$ is the sum of $\aa$ and a lex-segment ideal of $A$ because $\epsilon$ is the lex-embedding of $R$ (see (e)). Since $I$ and $\epsilon(UR)$ have the same Hilbert function we have $\epsilon(I)=\epsilon(UR)$ and the above inequality implies that $(R_D,R,\epsilon)$ is Betti extremal. \\
Finally, since $\epsilon_D(I)= \epsilon_D(\epsilon(UR))$ is, by definition,   $D(L')R_D,$ we have
$\beta_{ij}^A(A/L') = \beta_{ij}^A(R_D/\epsilon_{D}(I))$ for all $i,j.$ Therefore, also $(R_D,\epsilon_D)$ is Betti extremal. 

(h) We conclude now by proving (2). We proceed as we did in (g); the difference from the previous case is that Hilbert functions of local cohomology modules are not preserved by applying a distraction but, by Proposition \ref{codistra}, we know that they cannot decrease.

Recall that, by (e), $(R_{\bar D}, \epsilon_{\bar D})$ and $(R_{\bar D}, R, \epsilon)$ are cohomology extremal, and  therefore, for all $i, j$,
 
$$\Hilb\left(H^i_{\mm_A}(A/\ini_{\bf \omega}(gJ))\right)_j  = \Hilb \left(H^i_{\mm_A}(R_{\bar D}/I_{\bar D}\right)_j  \leq  \Hilb \left(H^i_{\mm_A}(R/\epsilon(I_{\bar D}))\right)_j,$$ i.e. $(R_D,R,\epsilon)$ is cohomology extremal.
Since  $\epsilon(I_{\bar D})=\epsilon(I)$, an application of Proposition \ref{codistra} implies
$$ \Hilb \left(H^i_{\mm_A}(R/\epsilon(I_{\bar D}))\right)_j = \Hilb \left(H^i_{\mm_A}(A/L')\right)_j
\leq \Hilb \left(H^i_{\mm_A}(A/D(L'))\right)_j= \Hilb \left(H^i_{\mm_A}(R_D/\epsilon_D(I))\right)_j.$$ 
We have thus proven that $(R_D,\epsilon_D)$ is also cohomology extremal and completed the proof of the theorem.
\end{proof}


%


\end{document}